\newtheorem{theorem}{Theorem}
\newtheorem{lemma}[theorem]{Lemma}
\newtheorem{remark}[theorem]{Remark}
\numberwithin{equation}{section}
\numberwithin{theorem}{section}
\renewenvironment{proof}{\hspace*{-0.6cm}{\bfseries Proof:}}{\hfill $\blacksquare$}
\def\bR{\mathbb R}
\begin{document}
\begin{frontmatter}

\title{Existence of Positive Radial Solutions of General Quasilinear Elliptic Systems}

\author{Daniel Devine}
\ead{dadevine@tcd.ie}
\address{School of Mathematics, Trinity College Dublin, Dublin 2, Ireland}

\begin{abstract}
Let $\Omega\subset\bR^{n}\ (n\geq2)$ be either an open ball $B_R$ centred at the origin or the whole space. We study the existence of positive, radial solutions of quasilinear elliptic systems of the form 
\begin{equation*}
\left\{
\begin{aligned}
\Delta_{p} u&=f_1(|x|)g_1(v)|\nabla u|^{\alpha}  &&\quad\mbox{ in } \Omega, \\
\Delta_{p} v&=f_2(|x|)g_2(v)h(|\nabla u|) &&\quad\mbox{ in } \Omega,
\end{aligned}
\right.
\end{equation*}
where $\alpha\geq 0$, $\Delta_{p}$ is the $p$-Laplace operator, $p>1$, and for $i,j=1,2$ we assume $f_i,g_j,h$ are continuous, non-negative and non-decreasing functions. For functions $g_j$ which grow polynomially, we prove sharp conditions for the existence of positive radial solutions which blow up at $\partial B_{R}$, and for the existence of global solutions. 
\end{abstract}
\begin{keyword}
    radial solutions\sep elliptic systems\sep $p$-Laplace operator\sep nonlinear gradient terms

    \MSC[2020]  35B44\sep 35J47\sep 35J92
\end{keyword}

\end{frontmatter}

\section{Introduction}
Let $\Omega\subset\bR^{n}\ (n\geq2)$ be either an open ball $B_R$ centred at the origin or the whole space, and for $i,j=1,2$ suppose $f_i,g_j,h$ are continuous, non-negative and non-decreasing functions. We study the existence of positive, radial solutions of quasilinear elliptic systems of the form 
\begin{equation}\label{system}
\left\{
\begin{aligned}
\Delta_{p} u&=f_1(|x|)g_1(v)|\nabla u|^{\alpha}   &&\quad\mbox{ in } \Omega, \\
\Delta_{p} v&=f_2(|x|)g_2(v)h(|\nabla u|) &&\quad\mbox{ in } \Omega,
\end{aligned}
\right.
\end{equation}
 where $\Delta_{p}u=\mbox{div}(|\nabla u|^{p-2}\nabla u)$ is the $p$-Laplace operator, $p>1$ and $\alpha\geq 0$. For $\Omega=B_R$, the following boundary behaviours are possible:
\begin{enumerate}[label=(B\arabic*)]
    \item $u$ and $v$ are bounded in $B_{R}$;
    \item $u$ is bounded in $B_{R}$ and $\lim\limits_{|x|\to R^{-}}v(x)=\infty$;
    \item $\lim\limits_{|x|\to R^{-}}u(x)=\lim\limits_{|x|\to R^{-}}v(x)=\infty$.
\end{enumerate}
The first equation of \eqref{system} prevents a positive radial solution satisfying $\lim_{|x|\to R^{-}}u(x)=\infty$ and $v$ is bounded in $B_R$ from existing (see Lemma \ref{no u2inf} below for details). We prove sharp conditions for the existence of positive radial solutions of \eqref{system} with boundary behaviours (B1)-(B3). We then turn our attention to the case $\Omega=\bR^{n}$, and show that global positive radial solutions of \eqref{system} exist if and only if all positive radial solutions satisfy (B1). 

The study of elliptic systems without the presence of gradient terms has a rich history. For semilinear elliptic systems we refer to, for instance, \cite{BVP2001, CR2002, GMRZ2009, L2011, LW1999, LW2000}, and for the case of quasilinear elliptic systems we refer to \cite{ACM2002, BVG2010, BVG1999, BV2000, BVP2001, CFMT2000,F2011}. More recently, systems with gradient terms such as \eqref{system} have been investigated \cite{BGW22, BVG23, DLS2005, F2013,FV17,GGS19,S15}, which we shall now discuss.  
 
Filippucci and Vinti \cite{FV17} studied the positive radial solutions of the quasilinear elliptic system 
\begin{equation}\label{FVsystem}
\left\{
\begin{aligned}
\Delta_{p} u&=v^m  &&\quad\mbox{ in } B_R, \\
\Delta_{p} v&=h(|\nabla u|) &&\quad\mbox{ in } B_R,
\end{aligned}
\right.
\end{equation}
where $m>0$ and $h\in C^{1}([0,\infty))$ is non-negative and non-decreasing. In the case $p\geq 2$, the authors show that a necessary condition for system \eqref{FVsystem} to admit a positive radial solution $(u,v)$
which blows up at $\partial B_R$ is given by
\begin{equation}\label{FVcond}
      \int_{1}^{\infty}\frac{s^{(p-2)(p-1)/(mp+p-1)} ds}{\left(\int_{0}^{s}\sqrt[\leftroot{-3}\uproot{3}p]{h(t)}dt\right)^{mp/(mp+p-1)}}<\infty.
\end{equation}

\noindent This extended an earlier result due to Singh \cite{S15}, who studied system \eqref{FVsystem} in the semilinear case $p=2$. The results of \cite{FV17, S15} were later built upon by Ghergu, Giacomoni and Singh \cite{GGS19} in the particular case $h(t)=t^q$, for $q>0$. The authors fully classified the positive radial solutions of the system
\begin{equation}\label{GGSsystem}
\left\{
\begin{aligned}
\Delta_{p} u&=v^m|\nabla u|^{\alpha}  &&\quad\mbox{ in } B_R, \\
\Delta_{p} v&=v^{\beta}|\nabla u|^{q} &&\quad\mbox{ in } B_R,
\end{aligned}
\right.
\end{equation}
according to their behaviour at $\partial B_{R}$. The exponents in \eqref{GGSsystem} are assumed to satisfy $m,q>0$, $\alpha\geq 0$, $0\leq \beta\leq m$ and $(p-1-\alpha)(p-1-\beta)-qm\neq 0$. Optimal conditions on the exponents for the existence of positive radial solutions with boundary behaviours (B1)-(B3) are found, and the exact rate of blow-up of solutions of \eqref{GGSsystem} was later investigated in \cite{BGW22}. 

Inspired by the above works, our aim in this paper is to extend the results of \cite{FV17}, as well as the existence results of \cite{GGS19}, to the much more general class of quasilinear elliptic systems \eqref{system}. Throughout this paper, we assume for each $i,j$ that
\begin{itemize}
    \item[(A1)] $f_i,g_j,h$ are continuous, non-decreasing on $[0,\infty)$ and positive on $(0,\infty)$.
\end{itemize}
When it comes to the functions $g_j$, we shall make one more assumption, namely that they grow polynomially. In particular, we shall also assume
\begin{itemize}
    \item [(A2)] there exist constants $k_1> 0$ and $ 0\leq k_2\leq k_1$ such that $g_{j}(t)/t^{k_j}$ is non-increasing in $(0,\infty)$ and 
\begin{equation*}
    \lim\limits_{t\to\infty}\frac{g_{j}(t)}{t^{k_j}}>0.
\end{equation*}
\end{itemize}
Systems \eqref{FVsystem} and \eqref{GGSsystem} clearly satisfy (A1)-(A2), but our assumption (A2) allows a much more general class of functions $g_j$ to be considered.  For example, each $g_j(t)$ could be any sum of non-negative powers of $t$ with non-negative coefficients. When it comes to the weight functions $f_i(t)$, once they satisfy (A1) it turns out they do not play a significant role in the analysis, so they can be quite arbitrary. We shall see that this essentially comes from the fact that their continuity on $[0,\infty)$ guarantees $\sup_{x\in B_R}f_i(|x|)=f_i(R)$ is finite for any $R>0$, so they do not influence the behaviour of a solution $(u,v)$ near $\partial B_R$. The main contribution of this paper, though, is the sharpness of the results for general functions $h(t)$ which are only assumed to satisfy (A1).

Our first goal in this paper is to give sharp conditions for the existence of non-constant positive radial solutions of \eqref{system} in $\Omega=B_R$ which satisfy (B1)-(B3). According to \cite[Theorem 1.2]{FV17}, condition \eqref{FVcond} is necessary for positive radial solutions of system \eqref{FVsystem} to satisfy (B2) or (B3). We extend this result in three directions. Namely, we extend it to the much more general system \eqref{system}, we extend it to all $p>1$, and we also show that our conditions are sharp. In fact, our result also extends \cite[Theorem 1.1]{FV17} and \cite[Theorem 2.1]{GGS19}. In order to prove the sharpness of these conditions, we first need to prove an integral comparison result, which is given at the end of Section \ref{prelim}.

The second goal of this paper is to give sharp conditions for the existence of global non-constant positive radial solutions of \eqref{system}. Using a simple comparison argument, we show that global positive radial solutions exist if and only if all positive radial solutions satisfy (B1). In other words, either all positive radial solutions of \eqref{system} are global or none are.

Before stating the main results of this paper, we again clarify that we are only interested in positive radial solutions, by which we shall always mean a pair $(u(r),v(r)), r=|x|$,  such that 
\begin{itemize}
    \item $u,v\in C^{2}(\Omega)$ are positive and solve \eqref{system};
    \item $u$ and $v$ are non-constant in any neighbourhood of the origin.
\end{itemize}
In the case $\Omega=\bR^{n}$, we call solutions of \eqref{system} global solutions.

\begin{theorem}\label{quas case}
Assume (A1),(A2), $0\leq\alpha<p-1$, and let $\theta=\frac{1}{p-1-\alpha}$. Let $(u,v)$ be a non-constant positive radial solution of \eqref{system}. Then:
\begin{enumerate}
    \item $(u,v)$ satisfies (B1) if and only if
    \begin{equation}\label{u v bound}
        \int_{1}^{\infty}\frac{ds}{\left(\int_{0}^{s}h(t^{\theta})^{\frac{1}{p}}dt\right)^{\frac{k_1p}{k_1p+p-1-k_2}}}=\infty
    \end{equation}
     \item $(u,v)$ satisfies (B2) if and only if
    \begin{equation}\label{v inf u bound}
        \int_{1}^{\infty}\frac{s^{\theta}\ ds}{\left(\int_{0}^{s}h(t^{\theta})^{\frac{1}{p}}dt\right)^{\frac{k_1p}{k_1p+p-1-k_2}}}<\infty
    \end{equation}
    \item $(u,v)$ satisfies (B3) if and only if
    \begin{equation}\label{u v inf}
                \int_{1}^{\infty}\frac{ds}{\left(\int_{0}^{s}h(t^{\theta})^{\frac{1}{p}}dt\right)^{\frac{k_1p}{k_1p+p-1-k_2}}}<\infty\quad \mbox{and}\quad \int_{1}^{\infty}\frac{s^{\theta}\ ds}{\left(\int_{0}^{s}h(t^{\theta})^{\frac{1}{p}}dt\right)^{\frac{k_1p}{k_1p+p-1-k_2}}}=\infty.
    \end{equation}
\end{enumerate}
\end{theorem}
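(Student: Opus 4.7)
Since the second integral in \eqref{v inf u bound} dominates the first in \eqref{u v bound} (its integrand carries the extra factor $s^\theta \geq 1$ for $s \geq 1$), the three integral conditions in the theorem are mutually exclusive and exhaustive, so it suffices to prove the three forward implications ``condition $(i) \Rightarrow (\text{B}i)$''.

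First, reducing \eqref{system} to the radial ODE system and using (A1) together with the non-constancy assumption, a standard integration argument yields $u'(r), v'(r) > 0$ for $r > 0$. Setting $A(r) := u'(r)^{p-1-\alpha}$, a direct computation transforms the first equation into
\begin{equation*}
(r^\gamma A(r))' = \tfrac{p-1-\alpha}{p-1}\, r^\gamma f_1(r) g_1(v(r)), \qquad \gamma := \tfrac{(n-1)(p-1-\alpha)}{p-1}.
\end{equation*}
Integrating from $0$ to $r$ and exploiting the monotonicity of $f_1, v, g_1$ gives the explicit representation $A(r) = \tfrac{p-1-\alpha}{p-1} r^{-\gamma}\int_0^r t^\gamma f_1(t) g_1(v(t))\,dt$ and the upper bound $A(r) \leq C r f_1(r) g_1(v(r))$. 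In particular, if $v$ is bounded on $[0,R)$ then so is $u'$, hence so is $u$, establishing ``$v$ bounded $\Rightarrow$ (B1)''.

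Second, combining this representation with the second equation in \eqref{system} and invoking (A2) in the form $g_j(v) \asymp v^{k_j}$ for $v$ large, together with the monotonicity of $h$, I extract a scalar ODE inequality for $v$ near $r = R$. Via separation of variables, this translates the dichotomy ``$v$ bounded vs.\ blow-up'' into divergence vs.\ convergence (up to multiplicative constants) of the scalar integral
\begin{equation*}
\int^\infty \frac{dv}{v^{k_2/(p-1)}\, h(c v^{k_1\theta})^{1/(p-1)}}.
\end{equation*}
Assuming $v$ blows up, the integrability of $u'$ at $R$---which separates (B2) from (B3)---is then reformulated via the change of variables $dr = dv/v'$ as convergence of $\int^\infty v^{k_1\theta - k_2/(p-1)}/h(cv^{k_1\theta})^{1/(p-1)}\,dv$.

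Finally, the integral comparison lemma from Section \ref{prelim} converts these scalar integrals---derived naturally from the ODE analysis---into the integrals \eqref{u v bound}--\eqref{u v inf} stated in the theorem, producing in particular the inner integral $\int_0^s h(t^\theta)^{1/p}\,dt$ and the exponent $k_1 p/(k_1 p + p - 1 - k_2)$ through an explicit change of integration variable. The main technical obstacle I anticipate is obtaining sharp two-sided bounds on $A(r)$ in the blow-up regime: naively integrating the first equation's identity and using monotonicity yields lower bounds only in terms of $g_1(v(r/2))$ rather than $g_1(v(r))$, and the polynomial-growth assumption (A2) is precisely what is needed to control the ratio $g_1(v(r))/g_1(v(r/2))$ via the inequality $g_1(v(r))/g_1(v(r/2)) \leq (v(r)/v(r/2))^{k_1}$, closing this gap to a precision sufficient for the subsequent integral comparison.
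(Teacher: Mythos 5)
Your opening observation is sound: since the integrand of \eqref{v inf u bound} dominates that of \eqref{u v bound} on $[1,\infty)$, the three integral conditions are mutually exclusive and exhaustive, and since Lemma \ref{no u2inf} rules out ``$u$ blows up, $v$ bounded'', the three boundary behaviours are likewise a partition. So proving the three forward implications does suffice, and this is implicitly the structure of the paper's argument as well. The derivation of the representation $(r^\delta A(r))' = \frac{\delta}{n-1} r^\delta f_1 g_1(v)$ with $A = (u')^{p-1-\alpha}$ and the deduction ``$v$ bounded $\Rightarrow$ (B1)'' are also fine.

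The gap is in the middle step, where you propose to ``extract a scalar ODE inequality for $v$'' and reduce the dichotomy to convergence of
$\int^\infty v^{-k_2/(p-1)} h(cv^{k_1\theta})^{-1/(p-1)}\,dv$.
This implicitly requires a pointwise two-sided comparison $A(r) \asymp v(r)^{k_1}$ so that $h(u')$ can be replaced by $h(cv^{k_1\theta})$. But the correct two-sided estimate from the radial ODE (Lemma \ref{convex} plus (A2)) is on the \emph{derivative}: $[A(r)]' \asymp v(r)^{k_1}$. Integrating this gives $A(r) \asymp \int_\rho^r v(t)^{k_1}\,dt$, which is \emph{not} $\asymp v(r)^{k_1}$ in the blow-up regime; the relationship between $u'$ and $v$ is intrinsically given by a nonlinear integral inequality, not a power law. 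The paper never claims such a pointwise relation: it instead multiplies the differential inequalities $v'(r)v(r)^{k_1-(k_2+1)/p} \leq C\,h(u')^{1/p}[A(r)]'$ (in one direction) and $h(u')[A(r)]' \leq Cv^{k_1-k_2}[v'^{\,p-1}]'$ (in the other) and integrates, which is precisely what produces the \emph{double}-integral expression $\int_0^s h(t^\theta)^{1/p}\,dt$. Your single-integral in $v$ happens to give the right threshold when $h$ is a pure power, but the derivation does not justify it for general $h$, and Lemma \ref{h H comp quas} cannot bridge this gap either: that lemma compares two double-integral expressions (one built from $H = \int h$, one from $h$), not a single integral against a double integral.

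Your closing paragraph identifies the wrong obstacle and proposes a fix that fails. You are worried about the ratio $g_1(v(r))/g_1(v(r/2))$ and suggest controlling it by $(v(r)/v(r/2))^{k_1}$, but this ratio is \emph{unbounded} as $r\to R^-$, since $v(r)\to\infty$ while $v(r/2)\to v(R/2)<\infty$. The paper avoids ever needing a bound on $A(r)$ itself in terms of $g_1(v(r))$; it works exclusively with $A'(r)$, where Lemma \ref{convex} gives sharp two-sided control \emph{without} any $r/2$ loss. If you want to rescue your outline, you should abandon the pointwise reduction $A \asymp v^{k_1}$ and the associated ``scalar ODE,'' and instead integrate the coupled inequalities as in Section \ref{proof thm quas}, arriving at the double-integral comparison $v(r)^{(k_1p+p-1-k_2)/p} \leq C\int_0^{A(r)} h(t^\theta)^{1/p}\,dt$ (and its counterpart $\int_0^{A(r)} H_\theta(t)^{1/(p-1)}\,dt \leq Cv(r)^{(k_1p+p-1-k_2)/(p-1)}$), from which separation of variables and Lemma \ref{h H comp quas} give the stated classification.
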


\begin{theorem}\label{global}
    Assume (A1),(A2), $\Omega=\bR^{n}$ and $\alpha\geq 0$. Then \eqref{system} admits non-constant global positive radial solutions if and only if $\alpha<p-1$ and \eqref{u v bound} holds.
\end{theorem}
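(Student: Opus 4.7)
The plan is a maximal-continuation argument that translates the boundary classification of Theorem~\ref{quas case} into a global existence statement. For $\alpha<p-1$, a matching analysis at the origin yields a non-constant local $C^{2}$ solution $(u,v)$ of \eqref{system} satisfying $u(0)=a>0$, $v(0)=b>0$, $u'(0)=v'(0)=0$, with $u'(r)\sim Cr^{\theta}$ as $r\to 0^{+}$, where $\theta=1/(p-1-\alpha)>0$. Let $R^{*}\in(0,\infty]$ denote its maximal radius of existence; the theorem then reduces to characterising when $R^{*}=\infty$.

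For the sufficiency direction, assume $\alpha<p-1$ and that \eqref{u v bound} holds. If $R^{*}<\infty$, then $(u,v)$ is a non-constant positive radial solution on $B_{R^{*}}$ and, by maximality, cannot remain bounded up to $r=R^{*}$ (a standard bootstrap using $\alpha<p-1$ shows that bounded solutions extend across $R^{*}$). Hence $(u,v)$ fails (B1), and Theorem~\ref{quas case}(2)--(3) forces the integral in \eqref{u v bound} to converge, contradicting the hypothesis. Therefore $R^{*}=\infty$ and the solution is global.

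For necessity, suppose a non-constant global positive radial solution $(u,v)$ exists. Granted $\alpha<p-1$, the restriction to any $B_{R}$ is a bounded non-constant positive radial solution satisfying (B1), so Theorem~\ref{quas case}(1) immediately yields \eqref{u v bound}. It remains to exclude $\alpha\geq p-1$. Setting $w(r)=r^{n-1}(u'(r))^{p-1}$ with $w(0)=0$, the first equation of \eqref{system} becomes $w'(r)=r^{n-1}f_{1}(r)g_{1}(v(r))(u'(r))^{\alpha}$. For $\alpha=p-1$ this reduces to the linear ODE $w'=f_{1}(r)g_{1}(v(r))\,w$ with continuous coefficient, so $w(0)=0$ forces $w\equiv 0$ and hence $u$ constant, a contradiction. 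For $\alpha>p-1$, a power-law matching $u'(r)\sim Cr^{m}$ at the origin yields the unique self-consistent exponent $m=1/(p-1-\alpha)<0$, which is incompatible with both the $C^{1}$ regularity of $u'$ at $0$ and the non-constancy of $u$ in every neighbourhood of the origin.

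The principal obstacle is the exclusion of $\alpha>p-1$. The power-matching argument above rules out polynomial behaviour of $u'$ near the origin, but one must additionally rule out $C^{2}$ solutions whose derivative $u'$ is \emph{super-flat} at $r=0$ (vanishing to infinite order) while $u$ still grows on every punctured neighbourhood of zero; handling this case typically calls for a barrier estimate or a unique-continuation-type argument tailored to the radial $p$-Laplacian with a gradient term of critical or supercritical order.
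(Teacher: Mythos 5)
Your argument has a genuine gap in the exclusion of $\alpha>p-1$, which you explicitly acknowledge at the end: the power-matching argument rules out algebraic behaviour of $u'$ near the origin but says nothing about super-flat solutions. The paper avoids this entirely with Lemma~\ref{alpha<p-1}: since $u'(r)>0$ for $r>0$ by Lemma~\ref{monotonic}, one writes the first equation as the logarithmic derivative
\begin{equation*}
\frac{\bigl[r^{n-1}u'(r)^{p-1}\bigr]'}{r^{n-1}u'(r)^{p-1}}=f_1(r)\,g_1(v(r))\,u'(r)^{\alpha-p+1},
\end{equation*}
and integrates over $[0,R/2]$. When $\alpha\geq p-1$ the exponent $\alpha-p+1\geq 0$, so the right-hand side is a finite integral; the left-hand side, however, is $\ln\bigl[r^{n-1}u'(r)^{p-1}\bigr]\big|_0^{R/2}=+\infty$ because $u'(0)=0$ and $p>1$. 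This clean contradiction handles both $\alpha=p-1$ and $\alpha>p-1$ at once, with no need to worry about the rate at which $u'$ vanishes at the origin.

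The necessity of \eqref{u v bound} is the second place where your route diverges from the paper's, and where I would be cautious. You restrict a global solution to a ball $B_R$ and invoke Theorem~\ref{quas case}(1) as stated. But the converse direction of Theorem~\ref{quas case}(1) (the ``(B1) implies \eqref{u v bound}'' direction) is proved by showing that if \eqref{u v bound} fails, the \emph{maximal} interval $[0,R_0)$ is finite; the constant in inequality \eqref{vk1} requires $f_1$ to be bounded on $[\rho,R_0)$, which is automatic only when $R_0<\infty$. For a global solution the maximal interval is all of $[0,\infty)$, $f_1$ need not be bounded there, and that estimate can collapse. This is precisely why the paper does not simply cite Theorem~\ref{quas case} for this step. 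Instead it performs a rescaling $(U_\lambda(r),V_\lambda(r))=(U(\lambda r),\lambda^{-c}V(\lambda r))$, which solves the auxiliary system \eqref{aux system} (still satisfying (A1)--(A2)), and compares it against an explosive solution $(u,v)$ of \eqref{aux system} blowing up at $r=1$; choosing $\lambda$ small so that $V_\lambda(0)>v(0)$, a monotone comparison argument based on the divergence forms in \eqref{sysrad2} forces $V_\lambda\geq v$ up to $r=1$, contradicting globality. That scaling step is the key idea missing from your proposal; without it, the appeal to Theorem~\ref{quas case}(1) for restricted global solutions rests on an estimate that the paper's proof does not actually provide in the non-compact setting.

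Your sufficiency argument is essentially the paper's: a solution on a maximal ball that fails to be global must blow up, and Theorem~\ref{quas case}(1) together with Lemma~\ref{no u2inf} forces the boundedness that lets one continue. That part is fine.
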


\begin{remark}
\normalfont{
Although not explicitly treated here, the results in this paper actually hold for several generalisations of system \eqref{system}. For example, in the first equation we could replace $g_1(v)$ by $g_1(v)\cdot g_3(u)$ for a continuous, non-decreasing and bounded function $g_3$ which is positive on $(0,\infty)$. When it comes to the second equation, we could similarly replace $g_2(v)$ by $g_2(v)\cdot g_4(u)$, with $g_4$ subject to the same conditions as $g_3$. Such additional factors would not affect the arguments whatsoever. For the sake of simplicity, though, such variations are not treated explicitly here. 
}
\end{remark}

The remaining sections are organized as follows. In Section \ref{prelim} we gather some preliminary information about solutions of system \eqref{system}. In particular, we show all positive radial solutions of \eqref{system} are increasing and convex. In Section \ref{proof thm quas} we prove Theorem \ref{quas case}, and finally the proof of Theorem \ref{global} is given in Section \ref{proof thm pol}.

\section{Properties of Solutions}\label{prelim}
In this section, we gather some properties which all non-constant positive radial solutions of \eqref{system} must have. We first show that if $(u,v)$ is a positive radial solution, then both $u$ and $v$ are increasing and convex. This explains why solutions which exist locally, but not globally,  must blow up around $\partial\Omega$. We also show that if \eqref{system} admits any positive radial solutions, then we necessarily have $0\leq \alpha <p-1$. The proofs of these results are similar to those in \cite{GGS19,S15}, but the necessary modifications for the more general system \eqref{system} are included for completeness. We conclude the section with an integral comparison result which will allow us to prove the sharpness of conditions \eqref{u v bound}-\eqref{u v inf}. 
\begin{lemma}\label{monotonic}
    Assume (A1), $\Omega=B_{R}$, $\alpha\geq 0$, and suppose $(u,v)$ is a non-constant radial solution of \eqref{system} with $u(0)>0$ and $v(0)>0$. Then $u'(r),v'(r)>0$ for all $0<r<R$.
\end{lemma}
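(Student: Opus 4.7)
The plan is to reduce the system to its radial ODE form and then read off the sign of $u'$ and $v'$ from the integrated equations. Since $\Delta_p w = r^{1-n}(r^{n-1}|w'|^{p-2}w')'$ for radial $w$, and since a $C^2$ radial solution must satisfy $u'(0)=v'(0)=0$ by regularity at the origin, integrating \eqref{system} from $0$ to $r$ yields
\begin{equation*}
r^{n-1}|u'(r)|^{p-2}u'(r) = \int_0^{r} s^{n-1} f_1(s)\, g_1(v(s))\, |u'(s)|^{\alpha}\, ds,
\end{equation*}
together with an analogous identity for $v$. Both right-hand sides are non-negative by (A1), so $u'(r),v'(r)\ge 0$ on $(0,R)$; combined with $u(0),v(0)>0$, this already forces $u,v>0$ throughout $[0,R)$, which in turn legitimises $g_j(v(r))>0$ via (A1).

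To upgrade non-negativity of $u'$ to strict positivity I would argue by contradiction. If $u'(r_0)=0$ for some $r_0\in(0,R)$, the displayed identity forces the integrand to vanish on $(0,r_0]$. In the case $\alpha=0$, the integrand reduces to $s^{n-1}f_1(s)g_1(v(s))$, which is strictly positive on $(0,r_0]$ by (A1) and the already-established positivity of $v$; this is an immediate contradiction. In the case $\alpha>0$, vanishing of the integrand instead forces $u'\equiv 0$ on $[0,r_0]$, so $u$ is constant in a neighbourhood of the origin, contradicting the non-constancy hypothesis. The same template then disposes of $v$: $v'(r_0)=0$ would force $h(|u'(s)|)\equiv 0$ on $(0,r_0]$, but by the step just completed $u'(s)>0$ there, and $h$ is strictly positive on $(0,\infty)$ by (A1), a contradiction.

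The argument is essentially routine once the integral form is in place; the principal point of care is isolating the $\alpha=0$ subcase (where $|u'|^{\alpha}=1$ removes the coupling through $u'$ and a direct contradiction is available) from the $\alpha>0$ subcase (where the non-constancy of $u$ near the origin must be invoked). A secondary subtlety is that the $v$-step requires $h$ to be strictly positive on $(0,\infty)$ rather than merely on $[0,\infty)$; hypothesis (A1) supplies exactly this. I expect no genuine obstacle beyond keeping the two steps in the right order, since strict positivity of $u'$ must be secured before the $v$ step can be closed.
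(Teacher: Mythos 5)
Your proof is correct and follows essentially the same route as the paper: pass to the integrated radial ODE and read off the sign of $u'$ and $v'$ from the non-negativity of the integrand. The paper is terser about the strict-positivity step — it introduces a maximal interval on which $v>0$ and then simply asserts that $r \mapsto r^{n-1}|u'|^{p-2}u'$ and $r \mapsto r^{n-1}|v'|^{p-2}v'$ are increasing there by (A1) — whereas you spell out the $\alpha>0$ case explicitly and invoke the non-constancy hypothesis to rule out $u'\equiv 0$ near the origin, which is precisely the point where (A1) alone does not deliver strictness (since $|u'|^{\alpha}$ can vanish); so your version makes explicit a step the paper implicitly relies on.
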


\begin{proof}
   Since $(u,v)$ is a radial solution of \eqref{system}, we can rewrite \eqref{system} as
 \begin{equation}\label{sysrad}
	\left\{
	\begin{aligned}
	&[r^{n-1}u'(r)|u'(r)|^{p-2}]'=r^{n-1}f_1(r)g_1(v(r))|u'(r)|^{\alpha}&&\quad\mbox{for all}\quad 0<r<R,\\
    &[r^{n-1}v'(r)|v'(r)|^{p-2}]'=r^{n-1}f_2(r)g_2(v(r))h(|u'(r)|)&&\quad\mbox{for all}\quad 0<r<R,\\
    &u'(0)=v'(0)=0, u(0)> 0, v(0)>0. 
	\end{aligned}
	\right.
\end{equation}
Since $v$ is continuous, there exists a maximal subinterval $(0,R_0)\subset (0,R)$ such that $v(r)>0$ for $0< r<R_0$.
By our assumption (A1), we thus have that $r\mapsto  r^{n-1}u'(r)|u'(r)|^{p-2}$ and $r\mapsto  r^{n-1}v'(r)|v'(r)|^{p-2}$ are increasing for $0<r<R_0$, and vanish at $r=0$.
It follows that $u'(r)>0$ and $v'(r)>0$ for all $0<r<R_0$. In particular, $v(r)$ cannot vanish at $r=R_0$, so we in fact have $R_0=R$, implying $v(r)>0$ on $[0,R)$. By the above we have that $u'(r)> 0$ and $v'(r)> 0$ on $(0,R)$. 
\end{proof}

\begin{lemma}\label{alpha<p-1}
 Assume (A1), $\Omega=B_R$ and $\alpha\geq p-1$. Then \eqref{system} does not admit any non-constant positive radial solutions for any $R>0$.
\end{lemma}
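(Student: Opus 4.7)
The plan is to rewrite the radial form of the first equation of \eqref{system} as a logarithmic ODE for $u'$, integrate it backwards from a fixed positive radius to some small $r$, and extract a contradiction by comparing the limits of both sides as $r\to 0^+$.

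Concretely, I would start from the radial form of the first equation of \eqref{sysrad}, namely $[r^{n-1}(u'(r))^{p-1}]' = r^{n-1} f_1(r) g_1(v(r))(u'(r))^{\alpha}$, expand the derivative on the left, and divide through by $r^{n-1}(u'(r))^{p-1}$, which is positive on $(0,R)$ by Lemma \ref{monotonic}. This gives
\[
(p-1)(\ln u'(r))' + \frac{n-1}{r} = f_1(r) g_1(v(r))(u'(r))^{\alpha-p+1}.
\]
Integrating from $r$ to an arbitrary fixed $r_0\in(0,R)$, with $0<r<r_0$, yields
\[
(p-1)\ln\frac{u'(r_0)}{u'(r)} + (n-1)\ln\frac{r_0}{r} = \int_r^{r_0} f_1(s) g_1(v(s))(u'(s))^{\alpha-p+1}\,ds.
\]

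Now I would let $r\to 0^+$. Because $u\in C^2(\Omega)$ is radial, $u'(0)=0$ and $u'(r)\to 0$, forcing $\ln(u'(r_0)/u'(r))\to +\infty$; and $\ln(r_0/r)\to +\infty$ as well. Hence the left-hand side blows up to $+\infty$. On the right, the hypothesis $\alpha\geq p-1$ ensures the exponent $\alpha-p+1$ is non-negative, so $(u'(s))^{\alpha-p+1}$ is continuous and bounded on the compact set $[0,r_0]$ (it is simply $1$ when $\alpha=p-1$, and vanishes at $s=0$ when $\alpha>p-1$). Together with the bounded factors $f_1,g_1(v)$, the integrand stays uniformly bounded, so the integral converges to a finite limit as $r\to 0^+$. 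This contradicts the left-hand side being infinite.

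The argument is short and the main point is really a choice of viewpoint: dividing by $(u')^{p-1}$ (rather than by $(u')^{\alpha}$ or any other power) is what produces a clean logarithmic derivative of $u'$ together with the $(n-1)/r$ singularity, whose antiderivative supplies the divergent $\ln(r_0/r)$ term. The assumption $\alpha\geq p-1$ enters in exactly one place: it keeps $(u')^{\alpha-p+1}$ bounded near the origin so that the right-hand integral is finite. When $\alpha<p-1$ this factor would instead be $(u')$ raised to a negative power and would blow up as $r\to 0^+$, potentially balancing the logarithmic divergence; this is consistent with positive radial solutions being permitted (and analysed in Theorem \ref{quas case}) in the regime $\alpha<p-1$.
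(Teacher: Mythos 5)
Your proof is correct and is essentially the same argument as the paper's. The paper integrates $[r^{n-1}u'(r)^{p-1}]'/(r^{n-1}u'(r)^{p-1})$ directly over $[0,R/2]$, obtaining a divergent $\ln[r^{n-1}u'(r)^{p-1}]\big|_0^{R/2}$ on the left versus a finite integral on the right (finiteness uses exactly your observation that $\alpha-p+1\ge 0$ keeps $(u')^{\alpha-p+1}$ bounded near the origin); your version merely splits that logarithmic derivative into $(p-1)(\ln u')' + (n-1)/r$ before integrating, which is the same computation presented in expanded form.
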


\begin{proof}
Suppose a positive radial solution $(u,v)$ exists. From Lemma \ref{monotonic}, we must have $u'(r)>0$ for all $0< r< R$, and so the first equation in \eqref{sysrad} can be expressed as
\begin{equation*}
\frac{\left[ r^{n-1} u'(r)^{p-1} \right]'}{r^{n-1}u'(r)^{p-1}} =
f_1(r) g_1(v(r)) \cdot u'(r)^{\alpha-p+1}
\end{equation*}
for all $0<r<R$.  Integrating this equation over the interval $[0,R/2]$ yields
\begin{equation*}
\ln\left[ r^{n-1} u'(r)^{p-1} \right] \Big|_0^{R/2} =
\int_0^{R/2} f_1(s) g_1(v(s)) \cdot u'(s)^{\alpha-p+1} \,ds.
\end{equation*}
Since $\alpha\geq p-1$ and $f_1, g_1$ are continuous, the right hand side above is clearly finite.  This is
a contradiction because $u'(0)=0$ and $p>1$, so the left hand side is infinite.
\end{proof}

\begin{lemma}\label{convex}
Assume (A1), $\Omega=B_{R}$, $0\leq\alpha<p-1$, and suppose $(u,v)$ is a non-constant positive radial solution of \eqref{system}. Then 
\begin{align}
     \frac{p-1-\alpha}{n(p-1-\alpha)+\alpha}f_1(r)g_1(v) \leq&[u'(r)^{p-1-\alpha}]'\leq \frac{p-1-\alpha}{p-1}f_1(r)g_1(v) ,\label{convex bounds1}\\
         \frac{1}{n}f_2(r)g_2(v)h(u') \leq&[v'(r)^{p-1}]'\leq f_2(r)g_2(v)h(u'),\label{convex bounds2}
\end{align}
for all $0<r<R$. In particular, both $u(r)$ and $v(r)$ are convex for all $0<r<R$.
\end{lemma}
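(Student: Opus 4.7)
The plan is to work directly with the radial form of the system, \eqref{sysrad}, using the sign information established in Lemma \ref{monotonic}. The key identity for each equation is that after expanding $[r^{n-1}u'{}^{p-1}]'$ (respectively $[r^{n-1}v'{}^{p-1}]'$), dividing by $r^{n-1}$, and manipulating the powers, one gets a linear first-order ODE in the quantity $U:=u'{}^{p-1-\alpha}$ (respectively $W:=v'{}^{p-1}$) to which an integrating-factor solution can be written. Bounding that solution from above using only monotonicity of the coefficients will yield both one-sided estimates in \eqref{convex bounds1} and \eqref{convex bounds2}.

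First I would handle \eqref{convex bounds1}. From \eqref{sysrad} and $u'>0$ on $(0,R)$ (Lemma \ref{monotonic}), expanding $[r^{n-1}u'{}^{p-1}]'$ and using $[u'{}^{p-1-\alpha}]' = (p-1-\alpha)u'{}^{p-2-\alpha}u''$ produces
\[
[u'{}^{p-1-\alpha}]' \;+\; \frac{(n-1)(p-1-\alpha)}{(p-1)r}\,u'{}^{p-1-\alpha} \;=\; \frac{p-1-\alpha}{p-1}\,f_1(r)g_1(v(r)).
\]
The upper bound in \eqref{convex bounds1} is then immediate because the subtracted term $\frac{(n-1)(p-1-\alpha)}{(p-1)r}u'{}^{p-1-\alpha}$ is non-negative. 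For the lower bound, I would multiply by the integrating factor $r^{\beta}$ with $\beta=(n-1)(p-1-\alpha)/(p-1)$, integrate from $0$ to $r$, and use that $f_1$ and $g_1\circ v$ are non-decreasing (the latter by Lemma \ref{monotonic}) to replace the integrand by its value at $s=r$. After simplifying $\beta+1=(n(p-1-\alpha)+\alpha)/(p-1)$, this gives
\[
u'(r)^{p-1-\alpha} \;\leq\; \frac{(p-1-\alpha)\,r}{n(p-1-\alpha)+\alpha}\,f_1(r)g_1(v(r)),
\]
and substituting back into the ODE and simplifying yields exactly the lower bound in \eqref{convex bounds1}.

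Since the lower bound forces $[u'{}^{p-1-\alpha}]'\geq 0$ and $p-1-\alpha>0$, I conclude that $u'$ is non-decreasing, i.e.\ $u$ is convex on $(0,R)$. This convexity is the crucial ingredient for the second inequality: repeating the above scheme on the second equation of \eqref{sysrad} with $W=v'{}^{p-1}$ gives
\[
[v'{}^{p-1}]' \;+\; \frac{n-1}{r}\,v'{}^{p-1} \;=\; f_2(r)g_2(v(r))h(u'(r)),
\]
so the upper bound is immediate. For the lower bound, I integrate $(r^{n-1}W)' = r^{n-1}f_2 g_2\, h(u')$, and now I really do need the monotonicity of the integrand: $f_2$, $g_2\circ v$ and $h\circ u'$ are each non-decreasing (using Lemma \ref{monotonic} and the convexity of $u$ just proved), so $r^{n-1}W(r)\leq \frac{r^n}{n}f_2(r)g_2(v(r))h(u'(r))$. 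Substituting back then yields $[v'{}^{p-1}]'\geq \frac{1}{n}f_2(r)g_2(v(r))h(u'(r))\geq 0$, which also gives convexity of $v$.

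The only subtle point, and the one that dictates the order of the argument, is this logical dependence: the lower bound in \eqref{convex bounds2} is the only place where we need $h\circ u'$ to be non-decreasing, and that requires the convexity of $u$ which in turn comes out of \eqref{convex bounds1}. So \eqref{convex bounds1} must be proved first, its lower bound used to deduce convexity of $u$, and only then can \eqref{convex bounds2} be obtained. Everything else reduces to a clean computation with an integrating factor and the monotonicity hypotheses in (A1).
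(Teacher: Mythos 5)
Your proposal is correct and follows essentially the same route as the paper: rewrite the radial system as a first-order linear ODE in $u'^{p-1-\alpha}$ (resp.\ $v'^{p-1}$), read off the upper bound from the sign of the damping term, integrate with the integrating factor $r^{\delta}$ (resp.\ $r^{n-1}$) and use monotonicity of the source to get the pointwise bound that yields the lower bound upon substitution, and — crucially, as you note — establish the convexity of $u$ first so that $h\circ u'$ is non-decreasing when handling the second equation. The paper's $\delta$ is exactly your $\beta$, and your intermediate estimate on $u'^{p-1-\alpha}$ coincides with the paper's estimate \eqref{u' estimate}.
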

\begin{proof}
From Lemma \ref{monotonic} we see that \eqref{sysrad} can be expressed as
  \begin{equation*}
	\left\{
	\begin{aligned}
	&[u'(r)^{p-1}]'+\frac{n-1}{r}u'(r)^{p-1}=f_1(r)g_1(v(r))u'(r)^{\alpha} &&\quad\mbox{for all}\quad 0<r<R,\\
    &[v'(r)^{p-1}]'+\frac{n-1}{r}v'(r)^{p-1}=f_2(r)g_2(v(r))h(u'(r))&&\quad\mbox{for all}\quad 0<r<R,\\
    &u'(0)=v'(0)=0, u(0)> 0, v(0)>0.
	\end{aligned}
	\right.
\end{equation*}
We can rewrite the first equation above to give us the following system
  \begin{equation}\label{sysrad1}
	\left\{
	\begin{aligned}
	&[u'(r)^{p-1-\alpha}]'+\frac{\delta}{r}u'(r)^{p-1-\alpha}=\frac{\delta}{n-1}f_1(r)g_1(v(r)) &&\quad\mbox{for all}\quad 0<r<R,\\
    &[v'(r)^{p-1}]'+\frac{n-1}{r}v'(r)^{p-1}=f_2(r)g_2(v(r))h(u'(r))&&\quad\mbox{for all}\quad 0<r<R,\\
    &u'(0)=v'(0)=0, u(0)> 0, v(0)>0,
	\end{aligned}
	\right.
\end{equation}
where $\delta$ is defined as
\begin{equation}\label{delta def}
    \delta=\frac{(n-1)(p-1-\alpha)}{p-1}>0.
\end{equation}
Since $u',v'>0$ for all $0<r<R$ from Lemma \ref{monotonic}, the upper bounds in \eqref{convex bounds1} and \eqref{convex bounds2} clearly hold. When it comes to the lower bounds, we first rewrite \eqref{sysrad1} as
 \begin{equation}\label{sysrad2}
	\left\{
	\begin{aligned}
	&[r^{\delta}u'(r)^{p-1-\alpha}]'=\frac{\delta}{n-1}r^{\delta}f_1(r)g_1(v(r)) &&\quad\mbox{for all}\quad 0<r<R,\\
    &[r^{n-1}v'(r)^{p-1}]'=r^{n-1}f_2(r)g_2(v(r))h(u'(r))&&\quad\mbox{for all}\quad 0<r<R,\\
    &u'(0)=v'(0)=0, u(0)> 0, v(0)>0.
	\end{aligned}
	\right.
\end{equation}
Now, since $v(r)$ is increasing for $0<r<R$, recalling our assumption (A1) the first equation in \eqref{sysrad2} tells us
\begin{equation*}
    r^{\delta}u'(r)^{p-1-\alpha}=\frac{\delta}{n-1}\int_{0}^{r}t^{\delta}f_1(t)g_1(v(t))dt
    \leq \frac{\delta}{n-1}f_1(r)g_1(v(r)) \int_{0}^{r}t^{\delta}dt 
\end{equation*}
for all $0<r<R$. So we have the estimate 
\begin{equation}\label{u' estimate}
    \frac{1}{r}u'(r)^{p-1-\alpha}\leq \frac{\delta}{(\delta+1)(n-1)}f_1(r)g_1(v(r)) 
\end{equation}
for all $0<r<R$, which, when combined with the first equation in \eqref{sysrad1} yields
\begin{align*}
    [u'(r)^{p-1-\alpha}]'&=\frac{\delta}{n-1}f_1(r)g_1(v(r))-\frac{\delta}{r}u'(r)^{p-1-\alpha}\\
    &\geq \frac{\delta}{(\delta+1)(n-1)}f_1(r)g_1(v(r)). 
\end{align*}
Recalling the definition of $\delta$ we see that this is precisely the lower bound in \eqref{convex bounds1}.
Now, since we have just shown that $u'(r)$ is increasing, we can take the second equation in \eqref{sysrad2} and argue in a similar way to conclude
\begin{equation*}
      r^{n-1}v'(r)^{p-1}
    \leq f_2(r)g_2(v(r))h(u'(r)) \int_{0}^{r}t^{n-1}dt=\frac{1}{n}r^{n} f_2(r)g_2(v(r))h(u'(r)) , 
\end{equation*}
for all $0<r<R$. One may now take the second equation in \eqref{sysrad1} and argue in an analogous manner to above to obtain the lower bound in \eqref{convex bounds2}. 
\end{proof}

\begin{lemma}\label{no u2inf}
    Assume (A1), $\Omega=B_R$ and $0\leq\alpha<p-1$. Then \eqref{system} does not admit any non-constant positive radial solutions $(u,v)$   satisfying 
    \begin{equation}\label{bad bounds}
        \lim\limits_{r\to R^{-}}u(r)=\infty\quad\mbox{and}\quad \lim\limits_{r\to R^{-}}v(r)<\infty
    \end{equation}
\end{lemma}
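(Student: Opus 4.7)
The plan is to argue by contradiction. Suppose that a non-constant positive radial solution $(u,v)$ of \eqref{system} with the boundary behaviour \eqref{bad bounds} does exist. Let $v_\infty=\lim_{r\to R^-}v(r)<\infty$. The idea is that the first equation of the system involves $v$ only through the factor $g_1(v)$ (and not through any derivative of $v$), so a uniform bound on $g_1(v)$ immediately gives a uniform bound on $u'(r)^{p-1-\alpha}$, hence on $u$.

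First I would invoke the upper bound in \eqref{convex bounds1} from Lemma \ref{convex}, namely
\begin{equation*}
    [u'(r)^{p-1-\alpha}]'\leq \frac{p-1-\alpha}{p-1}f_1(r)g_1(v(r)) \quad \mbox{for all } 0<r<R.
\end{equation*}
Since $g_1$ is continuous and non-decreasing and $v(r)\leq v_\infty$ on $[0,R)$, we have $g_1(v(r))\leq g_1(v_\infty)$. Since $f_1$ is continuous on $[0,R]$ and non-decreasing, we also have $f_1(r)\leq f_1(R)$. Therefore the right-hand side above is bounded by a finite constant $C=\frac{p-1-\alpha}{p-1}f_1(R)g_1(v_\infty)$ on $(0,R)$.

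Next I would integrate the inequality over $[0,r]$, using $u'(0)=0$, to deduce that
\begin{equation*}
    u'(r)^{p-1-\alpha}\leq C r\leq C R \quad \mbox{for all } 0<r<R.
\end{equation*}
Because $p-1-\alpha>0$ by hypothesis, this yields a uniform upper bound on $u'$ on $[0,R)$. Integrating once more gives $u(r)\leq u(0)+R\cdot(CR)^{1/(p-1-\alpha)}$ for $0<r<R$, contradicting $\lim_{r\to R^-}u(r)=\infty$.

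There is no substantial obstacle here: the argument exploits the fact that the first equation decouples in the sense that $v$ enters only through $g_1(v)$, so the $v$-bound transfers immediately to $u'$ via the convexity estimate established in Lemma \ref{convex}. The only point to be careful about is ensuring $p-1-\alpha>0$ so that the exponent $\frac{1}{p-1-\alpha}$ makes sense and preserves the direction of the inequality; this is already built into the hypothesis $0\leq\alpha<p-1$.
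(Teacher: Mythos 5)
Your proof is correct and takes essentially the same approach as the paper: both use the fact that $v$ bounded implies $g_1(v)$ bounded, which forces $u'$ to be bounded, contradicting the blow-up of $u$. The only cosmetic difference is that the paper invokes the intermediate estimate \eqref{u' estimate} from inside the proof of Lemma~\ref{convex}, whereas you integrate the stated upper bound in \eqref{convex bounds1}; both yield the same uniform bound on $u'(r)^{p-1-\alpha}$.
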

\begin{proof}
    Suppose $(u,v)$ was such a solution. For each $0\leq r<R$ we have
    \begin{equation*}
        u(r)=u(0)+\int_{0}^{r}u'(t)dt,
    \end{equation*}
    so from \eqref{bad bounds} we see that $\lim\limits_{r\to R^{-}}u'(r)=\infty$. But from \eqref{u' estimate}, \eqref{bad bounds} and (A1) we also have
    \begin{equation*}
        u'(r)^{p-1-\alpha}\leq \frac{\delta r}{(\delta+1)(n-1)}f_1(r)g_1(v(r))\leq C
    \end{equation*}
    for all $0<r<R$, and some constant $C>0$. Letting $r\to R^{-}$ we clearly reach a contradiction.
\end{proof}

\begin{lemma}\label{h H comp quas}
    Let $h(t)$ satisfy (A1), and $H(t)=\int_{0}^{t}h(s)ds$. Then for $s> 0$ we have
    \begin{equation*}
  (p-1)^{2p-1}\left(\int_{0}^{s}H(t)^{\frac{1}{p-1}}dt\right)^{p-1}\leq (p-1)^{p-1}\left(\int_{0}^{ps}h(t)^{\frac{1}{p}}dt\right)^{p}\leq \left(\int_{0}^{p^2s}H(t)^{\frac{1}{p-1}}dt\right)^{p-1}
\end{equation*}
and consequently, for each $\nu>0$
\begin{equation*}
    \int_{1}^{\infty}\frac{ds}{\left(\int_{0}^{s}H(t)^{\frac{1}{p-1}}dt\right)^{\nu(p-1)}}<\infty\quad \mbox{if and only if} \quad\int_{1}^{\infty}\frac{ds}{\left(\int_{0}^{s}h(t)^{\frac{1}{p}}dt\right)^{\nu p}}<\infty.
\end{equation*}
\end{lemma}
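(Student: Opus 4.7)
Let me denote $F(s)=\int_0^s h(t)^{1/p}\,dt$ and $G(s)=\int_0^s H(t)^{1/(p-1)}\,dt$; the statement then reduces to the two pointwise inequalities $(p-1)^p G(s)^{p-1}\le F(ps)^p$ and $(p-1)^{p-1}F(ps)^p \le G(p^2 s)^{p-1}$. The stated equivalence of improper integrals will follow immediately from these pointwise bounds together with the substitutions $u=ps$ and $u=p^2s$, so the content lies in the two inequalities.

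I expect the right-hand inequality to be the easier one. The plan is to apply H\"older's inequality with exponents $p,\,p/(p-1)$ to $F(t)=\int_0^t h^{1/p}\cdot 1\,ds$, which yields $F(t)^p\le t^{p-1}H(t)$, hence $H(t)^{1/(p-1)}\ge F(t)^{p/(p-1)}/t$. Using monotonicity of $H$ to discard everything below $ps$,
\[
G(p^2 s)\;\ge\;\int_{ps}^{p^2 s}H(t)^{1/(p-1)}\,dt\;\ge\;(p-1)ps\cdot H(ps)^{1/(p-1)}\;\ge\;(p-1)F(ps)^{p/(p-1)},
\]
and raising to the $(p-1)$-th power closes that direction.

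The main obstacle is the left inequality: it asks for an upper bound on $H^{1/(p-1)}$ in terms of $F$, whereas the naive estimate $H(t)\le th(t)$ only gives $H^{1/(p-1)}\le t^{1/(p-1)}h^{1/(p-1)}$, and $h^{1/(p-1)}\ge h^{1/p}$ so this is too crude. Setting $\phi:=h^{1/p}$ (non-decreasing), I would sharpen this by using monotonicity twice: $F(pt)-F(t)=\int_t^{pt}\phi\ge (p-1)t\,\phi(t)$ gives $\phi(t)\le F(pt)/[(p-1)t]$, while $H(t)=\int_0^t\phi^p\le t\phi(t)^p$. Combining,
\[
H(t)^{1/(p-1)}\;\le\;\frac{F(pt)^{p/(p-1)}}{(p-1)^{p/(p-1)}\,t},
\]
which, integrated over $[0,s]$ with the substitution $u=pt$, yields $G(s)\le (p-1)^{-p/(p-1)}\int_0^{ps}F(u)^{p/(p-1)}/u\,du$.

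To close the estimate I would use that $F$ is convex with $F(0)=0$, so $F(u)/u\le F'(u)$, whence
\[
\frac{F(u)^{p/(p-1)}}{u}\;\le\;F(u)^{1/(p-1)}F'(u)\;=\;\frac{p-1}{p}\frac{d}{du}F(u)^{p/(p-1)}.
\]
Integration then gives the clean bound $\int_0^{ps}F(u)^{p/(p-1)}/u\,du\le \tfrac{p-1}{p}F(ps)^{p/(p-1)}$, and chaining these displays followed by raising to the $(p-1)$-th power produces $G(s)^{p-1}\le F(ps)^p/[p^{p-1}(p-1)]$; since $p^{p-1}\ge (p-1)^{p-1}$ this is stronger than the required $(p-1)^p G(s)^{p-1}\le F(ps)^p$.
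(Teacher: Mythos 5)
Your proof is correct. For the right-hand inequality $(p-1)^{p-1}F(ps)^p \le G(p^2s)^{p-1}$, your argument is essentially the paper's: H\"older/Jensen gives $F(t)^p\le t^{p-1}H(t)$, and monotonicity of $H$ lets you replace a constant $H$-value over an interval by the integral of $H^{1/(p-1)}$. For the left-hand inequality the paper takes a shorter, purely pointwise route: it bounds $G(s)^{p-1}\le s^{p-1}H(s)$ by monotonicity of $H$, then $H(s)\le s\,h(s)$ by monotonicity of $h$, then writes $s^p h(s)=\bigl(\tfrac{1}{p-1}h(s)^{1/p}\int_s^{ps}dt\bigr)^p\le\bigl(\tfrac{1}{p-1}F(ps)\bigr)^p$, giving $(p-1)^pG(s)^{p-1}\le F(ps)^p$ in one chain with no integration. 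Your route instead derives the pointwise bound $H(t)^{1/(p-1)}\le (p-1)^{-p/(p-1)}F(pt)^{p/(p-1)}/t$ from $\phi(t)\le F(pt)/[(p-1)t]$ and $H(t)\le t\phi(t)^p$, integrates it, and then exploits the convexity of $F$ (via $F(u)/u\le F'(u)$) to evaluate $\int_0^{ps}F(u)^{p/(p-1)}/u\,du$ in closed form. This is a genuinely different decomposition: it is longer, but it produces the sharper constant $G(s)^{p-1}\le F(ps)^p/[p^{p-1}(p-1)]$, which dominates the paper's $1/(p-1)^p$. Since all that is needed downstream is the equivalence of the two improper integrals, both constants are equally serviceable; the paper's argument is the more economical one, while yours has the minor virtue of revealing that the constant $(p-1)^{2p-1}$ in the stated lemma is not optimal.
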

\begin{proof}
We start by noting that since $p>1$, the function $x\mapsto x^p$ is convex. If we fix $s>0$ we have
\begin{align*}
     \left(\int_{0}^{s}h(t)^{\frac{1}{p}}dt\right)^{p}= s^p\left(\frac{1}{s}\int_{0}^{s}h(t)^{\frac{1}{p}}dt\right)^{p}&\leq  s^{p-1}\int_{0}^{s}h(t)dt \quad\mbox{by Jensen's inequality}\\
     &=(p-1)^{1-p}\left(H(s)^{\frac{1}{p-1}}\int_{s}^{ps}d\sigma\right)^{p-1} \\
     &\leq (p-1)^{1-p}\left(\int_{s}^{ps}H(\sigma)^{\frac{1}{p-1}}d\sigma\right)^{p-1}\\
     &\leq (p-1)^{1-p}\left(\int_{0}^{ps}H(\sigma)^{\frac{1}{p-1}}d\sigma\right)^{p-1}
\end{align*}
and the second inequality follows. When it comes to the first inequality, we have
\begin{align*}
     \left(\int_{0}^{s}H(t)^{\frac{1}{p-1}}dt\right)^{p-1}&\leq  \left(H(s)^{\frac{1}{p-1}}\int_{0}^{s}dt\right)^{p-1}\ &&\mbox{since}\ H\ \mbox{is increasing}\\
     &= s^{p-1}\int_{0}^{s}h(t)dt\quad&&\mbox{by the definition of}\ H \\
     &\leq s^{p}h(s)\quad&&\mbox{since}\ h\ \mbox{is increasing}\\
     &= \left(\frac{1}{p-1}h(s)^{\frac{1}{p}}\int_{s}^{ps}dt\right)^p\\
     &\leq \left(\frac{1}{p-1}\int_{0}^{ps}h(t)^{\frac{1}{p}}dt\right)^p
\end{align*}
and the first inequality follows.
\end{proof}

\bigskip

\noindent Lemma \ref{h H comp quas} above is an extension of \cite[Lemma 4.1]{S15}, and is key to proving the sharpness of the conditions in Theorem \ref{quas case}. If we define for each $\theta>0$
\begin{equation}\label{H theta def}
    H_{\theta}(t)\coloneqq\int_{0}^{t}h(s^{\theta})ds,\quad t>0,
\end{equation} 
then Lemma \ref{h H comp quas} tells us that for each $\nu>0$ we have
\begin{equation*}
    \int_{1}^{\infty}\frac{ds}{\left(\int_{0}^{s}H_{\theta}(t)^{\frac{1}{p-1}}dt\right)^{\nu(p-1)}}<\infty\quad \mbox{if and only if} \quad\int_{1}^{\infty}\frac{ds}{\left(\int_{0}^{s}h(t^{\theta})^{\frac{1}{p}}dt\right)^{\nu p}}<\infty.
\end{equation*}
It is this formulation, with $\theta=\frac{1}{p-1-\alpha}$ and $\nu=\frac{k_1}{k_1p+p-1-k_2}$, that we shall use in the subsequent sections.

\section{ Proof of Theorem \ref{quas case}  }\label{proof thm quas}
Before the main proof, we first argue that system \eqref{system} has a non-constant positive radial solution in $\Omega=B_R$ for any $R>0$. To see this, we note the existence of a non-constant positive radial solution of \eqref{system} in a small ball $B_{\rho}$ follows from a standard fixed point argument (see for example \cite[Proposition A1]{FV17} for details). In particular, the map
\begin{equation*}
    \mathcal{T}:C^{1}[0,\rho]\times C^{1}[0,\rho]\rightarrow C^{1}[0,\rho]\times C^{1}[0,\rho]\
\end{equation*}
given by
\begin{equation*}
    \mathcal{T}[u,v](r)= \begin{pmatrix}\mathcal{T}_1[u,v](r)\\\mathcal{T}_2[u,v](r)\end{pmatrix}
\end{equation*}
where 
\begin{equation*}
\left\{
\begin{aligned}
\mathcal{T}_1[u,v](r)&=u(0)+\int_{0}^{r}\left[\frac{\delta}{n-1}s^{-\delta}\int_{0}^{s}\tau^{\delta}f_1(\tau)g_1(v(\tau))d\tau\right]^{\frac{1}{p-1-\alpha}} \\
\mathcal{T}_2[u,v](r)&=v(0)+\int_{0}^{r}\left[s^{1-n}\int_{0}^{s}\tau^{n-1}f_2(\tau)g_2(v(\tau))h(|u'(\tau)|)d\tau\right]^{\frac{1}{p-1}}
\end{aligned}
\right.
\end{equation*}
and $\delta$ is defined as in \eqref{delta def}, has a fixed point $(u,v)$ for sufficiently small $\rho>0$. Note from Lemma \ref{alpha<p-1} that $p-1-\alpha>0$. Now, let $R>0$ be arbitrary. We claim that \eqref{system} has a positive radial solution in $B_R$. By the above the following system
\begin{equation*}
\left\{
\begin{aligned}
\Delta_{p} \Tilde{u}&=\Tilde{f}_1(|x|)\Tilde{g}_1(\Tilde{v})|\nabla \Tilde{u}|^{\alpha}   &&\quad\mbox{ in } B_{\rho}, \\
\Delta_{p} \Tilde{v}&=\Tilde{f}_2(|x|)\Tilde{g}_2(\Tilde{v})\Tilde{h}(|\nabla \Tilde{u}|) &&\quad\mbox{ in } B_{\rho},
\end{aligned}
\right.
\end{equation*}
has a positive radial solution $(\Tilde{u},\Tilde{v})$ for sufficiently small $\rho$, where
\begin{align*}
    \Tilde{f}_1(r)&=\lambda^{p-1-\alpha}f_1(\lambda r)\quad \Tilde{f}_2(r)=\lambda^{p-1}f_2(\lambda r)\\
    \Tilde{g}_1(t)&=g_1(t)\qquad\qquad\ \ \Tilde{g}_2(t)=g_2(t)\\
    \Tilde{h}(t)&=h\left(\frac{t}{\lambda}\right),
\end{align*}
for $\lambda>0$. We then have that
\begin{equation*}
    u(r)=\Tilde{u}\left(\frac{r}{\lambda}\right)\quad v(r)=\Tilde{v}\left(\frac{r}{\lambda}\right)
\end{equation*}
is a solution of \eqref{system} in $B_{\lambda\rho}$, and the claim follows by letting $\lambda=\frac{R}{\rho}$. We are thus guaranteed the existence of non-constant positive radial solutions of \eqref{system} in $B_R$, and are now in a position to classify them according to their behaviour as $r\to R^{-}$. In what follows, $C$ will be used to denote a positive constant which may vary on each occurrence. In general $C=C(n,p,\alpha,R,u,v)$, but its exact value will not be important. 
\bigskip

 \noindent When it comes to part 1, we first assume that \eqref{system} admits a solution $(u,v)$ satisfying
\begin{equation}\label{vtoinf}
    \lim\limits_{r\to R^{-}}v(r)=\infty.
\end{equation}
From assumption (A2), for some constants $c_1,c_2>0$ we have the estimates 
 \begin{align*}
    c_1v(r)^{k_1}\leq g_1(v(r))&\leq \frac{g_1(v(0))}{v(0)^{k_1}}v(r)^{k_1}\\
    c_2v(r)^{k_2}\leq g_2(v(r))&\leq \frac{g_2(v(0))}{v(0)^{k_2}}v(r)^{k_2}
\end{align*}
for all $r>0$. For simplicity we shall let $w=u'$, and we note that $w$ is positive and increasing on $(0,R)$ by Lemmas \ref{monotonic} and \ref{convex}. Multiplying the right inequality in \eqref{convex bounds2} by $v'$, which is positive by Lemma \ref{monotonic}, and integrating over $[0,r]$ for $r<R$ we find
\begin{equation*}
 \int_{0}^{r}[v'(t)^{p-1}]'v'(t)dt \leq \int_{0}^{r} f_2(t)g_2(v(t))h(w(t))v'(t) dt. 
\end{equation*}
Now, by Lemmas \ref{monotonic} and \ref{convex}, we know that $v(r)$ and $w(r)$ are increasing in $(0,R)$, so by our assumptions (A1)-(A2) the above inequality becomes 
\begin{equation*}
   \frac{p-1}{p}\int_{0}^{r} (v'(t)^{p})'dt\leq Ch(w(r))\int_{0}^{r}v(t)^{k_2}v'(t)dt 
\end{equation*}
from which we have, recalling $v'(0)=0$,
\begin{equation*}\label{root f bound}
v'(r)^{p}\leq Ch(w(r))\int_{v(0)}^{v(r)}t^{k_2}dt\leq Cv(r)^{k_2+1}h(w(r))
\end{equation*}
for all $ r\in(0,R)$. In other words,
\begin{equation}\label{v'G2}
    v'(r)v(r)^{-\frac{k_2+1}{p}}\leq Ch(w(r))^{\frac{1}{p}}
\end{equation}
for all $ r\in(0,R)$. 
Fix now some $\rho\in(0,R)$. Inequality \eqref{convex bounds1} and our assumptions (A1)-(A2) tell us that for all  $ r\in[\rho,R)$ we have
\begin{equation*}
    Cv(r)^{k_1}\leq f_1(\rho)g_1(v(r))\leq f_1(r)g_1(v(r))\leq \frac{n(p-1-\alpha)+\alpha}{p-1-\alpha}[w(r)^{p-1-\alpha}]'.
\end{equation*}
Combining this with \eqref{v'G2} yields
\begin{equation*}
     v'(r)v(r)^{k_1-\frac{k_2+1}{p}}\leq Ch(w(r))^{\frac{1}{p}}[w(r)^{p-1-\alpha}]',
\end{equation*}
which we can integrate over $[\rho,r]$ for any $r\in(\rho,R)$ to obtain, after changing variables,
\begin{align}
     \int_{v(\rho)}^{v(r)}t^{k_1-\frac{k_2+1}{p}}dt&\leq C\int_{w(\rho)^{p-1-\alpha}}^{w(r)^{p-1-\alpha}}h(t^{\frac{1}{p-1-\alpha}})^{\frac{1}{p}}dt\nonumber \\
     &\leq C\int_{0}^{w(r)^{p-1-\alpha}}h(t^{\frac{1}{p-1-\alpha}})^{\frac{1}{p}}dt\label{Gv-Gv0}
\end{align}
for all $ r\in(\rho,R)$. Noting that $k_1p> k_1\geq k_2$ by our assumption (A2), we may rewrite \eqref{Gv-Gv0} as
\begin{align*}
    v(r)^{\frac{k_1p+p-1-k_2}{p}}&\leq v(\rho)^{\frac{k_1p+p-1-k_2}{p}}+ C\int_{0}^{w(r)^{p-1-\alpha}}h(t^{\frac{1}{p-1-\alpha}})^{\frac{1}{p}}dt\\
    &=\left(\frac{v(\rho)^{\frac{k_1p+p-1-k_2}{p}}}{\int_{0}^{w(r)^{p-1-\alpha}}h(t^{\frac{1}{p-1-\alpha}})^{\frac{1}{p}}dt}+C\right)\int_{0}^{w(r)^{p-1-\alpha}}h(t^{\frac{1}{p-1-\alpha}})^{\frac{1}{p}}dt\\
    &\leq \left(\frac{v(\rho)^{\frac{k_1p+p-1-k_2}{p}}}{\int_{0}^{w(\rho)^{p-1-\alpha}}h(t^{\frac{1}{p-1-\alpha}})^{\frac{1}{p}}dt}+C\right)\int_{0}^{w(r)^{p-1-\alpha}}h(t^{\frac{1}{p-1-\alpha}})^{\frac{1}{p}}dt
\end{align*}
and since $\rho>0$ is fixed, we can express the above as
\begin{equation*}\label{G less h int}
    v(r)^{\frac{k_1p+p-1-k_2}{p}}\leq C\int_{0}^{w(r)^{p-1-\alpha}}h(t^{\frac{1}{p-1-\alpha}})^{\frac{1}{p}}dt,
\end{equation*}
for all $r\in[\rho,R)$. In other words
\begin{equation*}
    v(r)\leq C\left(\int_{0}^{w(r)^{p-1-\alpha}}h(t^{\frac{1}{p-1-\alpha}})^{\frac{1}{p}}dt\right)^{\frac{p}{k_1p+p-1-k_2}}
\end{equation*}
for all $r\in[\rho,R)$. Combining this with \eqref{convex bounds1} we have
\begin{align}\label{g1 upp bound}
    [w(r)^{p-1-\alpha}]'&\leq \frac{p-1-\alpha}{p-1}f_1(r)g_1(v(r))\nonumber\\
    &\leq Cv(r)^{k_1}\nonumber\\
    &\leq C\left(\int_{0}^{w(r)^{p-1-\alpha}}h(t^{\frac{1}{p-1-\alpha}})^{\frac{1}{p}}dt\right)^{\frac{k_1p}{k_1p+p-1-k_2}}
\end{align}
for all $r\in[\rho,R)$.
It follows from \eqref{g1 upp bound} that for some constant $C>0$ and all $r\in[\rho,R)$
\begin{equation*}
    \frac{[w(r)^{p-1-\alpha}]'}{\left(\int_{0}^{w(r)^{p-1-\alpha}}h(t^{\frac{1}{p-1-\alpha}})^{\frac{1}{p}}dt\right)^{\frac{k_1p}{k_1p+p-1-k_2}}}\leq C. 
\end{equation*}
 We can integrate both sides of this expression over $[\rho,r]$ for $r\in(\rho,R)$ to find
\begin{equation*}
     \int_{\rho}^{r}\frac{[w(s)^{p-1-\alpha}]'\ ds}{\left(\int_{0}^{w(s)^{p-1-\alpha}}h(t^{\frac{1}{p-1-\alpha}})^{\frac{1}{p}}dt\right)^{\frac{k_1p}{k_1p+p-1-k_2}}}\leq C \int_{\rho}^{r}ds
\end{equation*}
or, after a change of variables
\begin{equation}\label{bounded above int}
    \int_{w(\rho)^{p-1-\alpha}}^{w(r)^{p-1-\alpha}}\frac{ds}{\left(\int_{0}^{s}h(t^{\frac{1}{p-1-\alpha}})^{\frac{1}{p}}dt\right)^{\frac{k_1p}{k_1p+p-1-k_2}}}\leq C(r-\rho)\leq Cr.
\end{equation}
Letting $r\to R^{-}$, we see, recalling \eqref{convex bounds1} and \eqref{vtoinf}
\begin{equation*}
     \int_{w(\rho)^{p-1-\alpha}}^{\infty}\frac{ds}{\left(\int_{0}^{s}h(t^{\frac{1}{p-1-\alpha}})^{\frac{1}{p}}dt\right)^{\frac{k_1p}{k_1p+p-1-k_2}}}\leq CR<\infty.
\end{equation*}
Hence,
\begin{equation*}\label{int sqrt h}
   \int_{1}^{\infty}\frac{ds}{\left(\int_{0}^{s}h(t^{\frac{1}{p-1-\alpha}})^{\frac{1}{p}}dt\right)^{\frac{k_1p}{k_1p+p-1-k_2}}}<\infty.
\end{equation*}
Recalling Lemma \ref{no u2inf}, we thus have that all positive radial solutions of \eqref{system} satisfy (B1) if \eqref{u v bound} holds. In order to prove the converse statement, we first note that system \eqref{system} has a positive radial solution $(u,v)$ defined on some maximal interval $[0,R_{0})$. Suppose then that \eqref{u v bound} does not hold, and fix $\rho\in(0,R_0)$. Recall  from \eqref{convex bounds1},\eqref{convex bounds2} and our assumptions (A1)-(A2) that we have 
    \begin{align}
        [w(r)^{p-1-\alpha}]'&\leq Cv(r)^{k_1}\label{vk1}\\
        v(r)^{k_2}h(w(r))&\leq C[v'(r)^{p-1}]'\label{vk2}
    \end{align}
    for all $ r\in[\rho,R_0)$. 
    Multiplying \eqref{vk1} and \eqref{vk2} and rearranging gives
    \begin{equation*}
        h(w(r))  [w(r)^{p-1-\alpha}]'\leq Cv(r)^{k_1-k_2}[v'(r)^{p-1}]'
    \end{equation*}
    for all $ r\in[\rho,R_0)$. Integrating the above inequality over $[\rho,r]$ for any $r\in(\rho,R_0)$ we obtain
    \begin{align*}
        \int_{w(\rho)^{p-1-\alpha}}^{w(r)^{p-1-\alpha}}h(t^{\frac{1}{p-1-\alpha}})dt&\leq C\int_{\rho}^{r}v(t)^{k_1-k_2}[v'(t)^{p-1}]'dt\\
        &\leq C\int_{0}^{r} v(t)^{k_1-k_2}[v'(t)^{p-1}]'dt\\
        &\leq Cv(r)^{k_1-k_2}\int_{0}^{r} [v'(t)^{p-1}]'dt\\
        &\leq Cv(r)^{k_1-k_2}v'(r)^{p-1},
    \end{align*}
recalling that $k_1\geq k_2$ by (A2) and $v'(0)=0$. Taking the extreme left and right above, one has
\begin{align}\label{wvv'}
    \int_{0}^{w(r)^{p-1-\alpha}}h(t^{\frac{1}{p-1-\alpha}})dt&\leq \int_{0}^{w(\rho)^{p-1-\alpha}}h(t^{\frac{1}{p-1-\alpha}})dt +Cv(r)^{k_1-k_2}v'(r)^{p-1}\nonumber\\ 
    &=\left(\frac{\int_{0}^{w(\rho)^{p-1-\alpha}}h(t^{\frac{1}{p-1-\alpha}})dt}{v(r)^{k_1-k_2}v'(r)^{p-1}}+C\right)v(r)^{k_1-k_2}v'(r)^{p-1}\nonumber\\
    &\leq \left(\frac{\int_{0}^{w(\rho)^{p-1-\alpha}}h(t^{\frac{1}{p-1-\alpha}})dt}{v(\rho)^{k_1-k_2}v'(\rho)^{p-1}}+C\right)v(r)^{k_1-k_2}v'(r)^{p-1}\nonumber\\
    &\leq  Cv(r)^{k_1-k_2}v'(r)^{p-1}
\end{align}
for all $r\in[\rho,R_0)$, recalling that $v'(r)$ is positive for $r>0$ by Lemma \ref{monotonic}. We now let $\theta=\frac{1}{p-1-\alpha}$, and recall from \eqref{H theta def} that we define
    \begin{equation*}
        H_{\theta}(t)=\int_{0}^{t}h(s^{\theta})ds,\ t>0.
    \end{equation*}
Multiplying inequalities \eqref{vk1} and \eqref{wvv'} thus gives
\begin{equation*}
    H_{\theta}(w(r)^{p-1-\alpha})^{\frac{1}{p-1}}[w(r)^{p-1-\alpha}]'\leq Cv(r)^{\frac{k_1p-k_2}{p-1}}v'(r)
\end{equation*}
for all $ r\in[\rho,R_0)$. A further integration over $[\rho,r]$ yields
\begin{equation*}
    \int_{w(\rho)^{p-1-\alpha}}^{w(r)^{p-1-\alpha}}H_{\theta}(t)^{\frac{1}{p-1}}dt \leq Cv(r)^{\frac{k_1p+p-1-k_2}{p-1}},
\end{equation*}
for all $r\in(\rho,R_0)$. Similar to above, since $k_1p+p-1-k_2>0$, there exists $C>0$ such that
\begin{equation*}
    \int_{0}^{w(r)^{p-1-\alpha}}H_{\theta}(t)^{\frac{1}{p-1}}dt \leq Cv(r)^{\frac{k_1p+p-1-k_2}{p-1}}
\end{equation*}
for all $r\in(\rho,R_0)$. Using \eqref{convex bounds1} and our assumption (A2) 
the above inequality becomes
\begin{equation*}
    \int_{0}^{w(r)^{p-1-\alpha}}H_{\theta}(t)^{\frac{1}{p-1}}dt \leq C[v(r)^{k_1}]^{\frac{k_1p+p-1-k_2}{k_1(p-1)}}\leq C([w(r)^{p-1-\alpha}]')^{\frac{k_1p+p-1-k_2}{k_1(p-1)}}
\end{equation*}
for all $r\in(\rho,R_0)$, yielding
\begin{equation}\label{clessw}
    C\leq \frac{[w(r)^{p-1-\alpha}]'}{\left(\int_{0}^{w(r)^{p-1-\alpha}}H_{\theta}(t)^{\frac{1}{p-1}}dt\right)^{\frac{k_1(p-1)}{k_1p+p-1-k_2}}}
.
\end{equation}
Integrating \eqref{clessw} over $[\rho,r]$ and letting $r\to R_0^{-}$ we find
\begin{align}\label{intwbounded}
    C(R_0-\rho)&\leq \int_{w(\rho)^{p-1-\alpha}}^{w(R_0^{-})^{p-1-\alpha}}\frac{ds}{\left(\int_{0}^{s}H_{\theta}(t)^{\frac{1}{p-1}}dt\right)^{\frac{k_1(p-1)}{k_1p+p-1-k_2}}} \nonumber\\
    &\leq \int_{w(\rho)^{p-1-\alpha}}^{\infty}\frac{ds}{\left(\int_{0}^{s}H_{\theta}(t)^{\frac{1}{p-1}}dt\right)^{\frac{k_1(p-1)}{k_1p+p-1-k_2}}}
\end{align}
 where $w(R_0^{-})\in \bR^{+}\cup\{\infty\}$ is understood to mean $\lim_{r\to R_0^{-}}w(r)$. If \eqref{u v bound} does not hold, then Lemma \ref{h H comp quas} together with \eqref{intwbounded} implies the maximal interval of existence $[0,R_0)$ is finite. By Lemmas \ref{monotonic} and \ref{no u2inf}, this implies $v(R_0^{-})=\infty$, proving part 1. 
 
 Turning now to part 2, assume that \eqref{vtoinf} holds and let $\Phi:(0,\infty)\rightarrow (0,\infty)$ be defined as
\begin{equation*}
  \Phi(t)\coloneqq \int_{t}^{\infty}\frac{ds}{\left(\int_{0}^{s}h(t^{\frac{1}{p-1-\alpha}})^{\frac{1}{p}}dt\right)^{\frac{k_1p}{k_1p+p-1-k_2}}}.
\end{equation*}
 We note that $\Phi$ is decreasing and by part 1 we have $\lim\limits_{t\to \infty}\Phi(t)=0$. From \eqref{bounded above int} and \eqref{intwbounded} we see there exists $\rho\in(0,R)$ such that
\begin{equation*}
     C_1(R-r)\leq \Phi(w(r)^{p-1-\alpha})\leq C_2(R-r)
\end{equation*}
for all $ r\in[\rho,R)$ and some constants $C_2>C_1>0$. Since $\Phi$ is decreasing, this implies
\begin{equation*}
    \Phi^{-1}(C_2(R-r))^{\frac{1}{p-1-\alpha}}\leq w(r)\leq \Phi^{-1}(C_1(R-r))^{\frac{1}{p-1-\alpha}} .
\end{equation*}
Now, recalling that for all $ r\in[\rho,R)$
\begin{equation*}
u(r)=u(\rho)+\int_{\rho}^{r}w(t)dt,
\end{equation*}
we see that $\lim\limits_{r\to R^{-}}u(r)<\infty$ if and only if
\begin{equation*}\label{int w finite}
    \int_{\rho}^{R}w(t)dt<\infty,
\end{equation*}
which holds if and only if
\begin{equation*}
    \int_{\rho}^{R}\Phi^{-1}(C(R-t))^{\frac{1}{p-1-\alpha}}dt<\infty
\end{equation*}
for some $C>0$. Hence if $\lim\limits_{r\to R^{-}}u(r)<\infty$, after a change of variables we see
\begin{equation*}
    \int_{0}^{C(R-\rho)}\Phi^{-1}(s)^{\frac{1}{p-1-\alpha}}ds<\infty,
\end{equation*}
which yields
\begin{equation*}
    \int_{0}^{1}\Phi^{-1}(s)^{\frac{1}{p-1-\alpha}}ds<\infty.
\end{equation*}
The change of variables $t=\Phi^{-1}(s)$ gives us that $\lim\limits_{r\to R^{-}}u(r)<\infty$ if and only if
\begin{equation*}
 \int_{1}^{\infty}\frac{s^{\frac{1}{p-1-\alpha}}\ ds}{\left(\resizebox{\width}{1.2\height}{$\int_{0}^{s}$}{h(t^{\frac{1}{p-1-\alpha}})^{\frac{1}{p}}}dt\right)^{\frac{k_1p}{k_1p+p-1-k_2}}}<\infty,
\end{equation*}
finishing the proof. $\hfill\blacksquare$

\begin{remark}
\normalfont{
A possible extension of the above results would be to somehow relax assumption (A2), however the analysis becomes quite technical in this case, and the results obtained are no longer sharp, so we do not discuss it here. 
}
\end{remark}

\section{Proof of Theorem \ref{global}}\label{proof thm pol}

\noindent First assume that \eqref{u v bound} holds and $0\leq \alpha<p-1$. From the discussion at the beginning of Section \ref{proof thm quas}, we know that when $0\leq \alpha<p-1$ we are able to construct a non-constant positive radial solution in a maximal ball. By Lemma \ref{monotonic}, both $u$ and $v$ are increasing, and by Lemma \ref{no u2inf} and Theorem \ref{quas case} part 1, we know that $u$ and $v$ are bounded when \eqref{u v bound} holds. Hence the domain of existence must be all of $\bR^{n}$.

Conversely, we know from Lemma \ref{alpha<p-1} that if $\alpha\geq p-1$, then no positive radial solution of \eqref{system} exists in $B_R$ for any $R>0$. So suppose now that $0\leq\alpha<p-1$, \eqref{u v bound} does not hold, and $(U,V)$ is a global positive radial solution of \eqref{system}. We consider the related system
\begin{equation}\label{aux system}
\left\{
\begin{aligned}
\Delta_{p} u&=\Tilde{f}_1(|x|)\Tilde{g}_1(v)|\nabla u|^{\alpha}   &&\quad\mbox{ in } \Omega, \\
\Delta_{p} v&=\Tilde{f}_2(|x|)\Tilde{g}_2(v)\Tilde{h}(|\nabla u|) &&\quad\mbox{ in } \Omega,
\end{aligned}
\right.
\end{equation}
where
\begin{align*}
    \Tilde{f}_1(r)&=\lambda^{p-1-\alpha}f_1(\lambda r)\quad \Tilde{f}_2(r)=\lambda^{(p-1)(1-c)}f_2(\lambda r)\\
    \Tilde{g}_1(t)&=g_1(\lambda^{c}t)\qquad\qquad\ \ \Tilde{g}_2(t)=g_2(\lambda^{c}t)\\
    \Tilde{h}(t)&=h\left(\frac{t}{\lambda}\right)
\end{align*}
for some $0<c<1$ and $\lambda>0$. Then $(U_\lambda(r),V_{\lambda}(r))=(U(\lambda r),\lambda^{-c}V(\lambda r))$ is a global positive radial solution of \eqref{aux system}. The system \eqref{aux system} also satisfies the hypotheses (A1)-(A2), so by Theorem \ref{quas case}, since \eqref{u v bound} does not hold, there exists a positive radial solution $(u,v)$ of \eqref{aux system} satisfying, without loss of generality, $\lim_{r\to 1^{-}}v(r)=\infty$. By letting $\lambda>0$ be sufficiently small, we may assume $V_{\lambda}(0)>v(0)>0$. Let $(0,R_0)\subset (0,1)$ be the maximal interval on which
\begin{equation} \label{comp2}
 V_{\lambda}(r) > v(r) \qquad\text{for all $0<r<R_0$.}
\end{equation}
Now, since $(U_{\lambda},V_{\lambda})$ and $(u,v)$ both solve system \eqref{aux system}, if we rewrite system \eqref{aux system} in an analogous manner to \eqref{sysrad2}, we see that $(U_{\lambda},V_{\lambda})$ and $(u,v)$ also satisfy 
\begin{equation*}
\left[ r^\delta (U_{\lambda}'(r)^{p-1-\alpha} - u'(r)^{p-1-\alpha}) \right]' =
\frac{\delta}{n-1} \,r^\delta \Tilde{f}_1(r) \cdot \left[\Tilde{g}_1(V_{\lambda}) - \Tilde{g}_1(v)\right],
\end{equation*}
with $\delta$ as defined in \eqref{delta def}.
Along the interval $(0,R_0)$, we have $V_{\lambda}>v>0$ by \eqref{comp2}, so $\Tilde{g}_1(V_{\lambda}) > \Tilde{g}_1(v)$
by (A2).  Since $\Tilde{f}_1(r)$ is positive by (A1), the right hand side above is clearly positive, so it follows that $U_{\lambda}'(r)>u'(r)$ for all $0<r<R_0$, recalling that $p-1-\alpha>0$.

When it comes to the functions $V_{\lambda},v$,  by using the second equation in \eqref{aux system}, we get
\begin{equation}\label{vlv comp}
\left[r^{n-1} (V_{\lambda}'(r)^{p-1} - v'(r)^{p-1})\right]' =
r^{n-1} \Tilde{f}_2(r) \cdot \left[\Tilde{g}_2(V_{\lambda}) \Tilde{h}(U_{\lambda}') - \Tilde{g}_2(v) \Tilde{h}(u')\right].
\end{equation}
Since $U_{\lambda}'>u'$ on the interval $(0,R_0)$ by above, we have $\Tilde{h}(U_{\lambda}') \geq \Tilde{h}(u')$ on $(0,R_0)$ by
(A1). In addition, $V_{\lambda}>v$ on $(0,R_0)$ by \eqref{comp2}, and thus $\Tilde{g}_2(V_{\lambda}) \geq \Tilde{g}_2(v)$ on $(0,R_0)$ by (A1), showing that the right hand side of \eqref{vlv comp} is non-negative. If the right hand side of \eqref{vlv comp} is identically zero, then since $V_{\lambda}'(0)=v'(0)=0$, we in fact have $V_{\lambda}'(r)=v'(r)$ on $(0,R_0)$. If the right hand side of \eqref{vlv comp} becomes positive on $(0,R_0)$, we may argue similarly to above to conclude $V_{\lambda}'(r)\geq v'(r)$ for all $0<r<R_0$. So in both cases we have $V_{\lambda}'(r)\geq v'(r)$ for all $0<r<R_0$.
But since $V_{\lambda}(r)> v(r)$ on $(0,R_0)$ it cannot possibly be the case that $v(r)\geq V_{\lambda}(r)$ at $r=R_0$.  In other words,
the maximal interval on which \eqref{comp2} holds coincides with the maximal interval on which $v(r)$ is defined, implying $\lim_{r\to 1^{-}}V_{\lambda}(r)\geq\lim_{r\to 1^{-}}v(r)=\infty$, which contradicts the fact that $(U_{\lambda},V_{\lambda})$ is a global solution of \eqref{aux system}.$\hfill \blacksquare$

\section*{Acknowledgements}
The author thanks Paschalis Karageorgis and Gurpreet Singh for their helpful discussions and comments during the preparation of this paper.

\section*{Funding}
The author acknowledges the financial support of The Irish Research Council Postgraduate Scholarship, under grant number GOIPG/2022/469.

\bibliography{existence}
\bibliographystyle{plain}
\end{document}